\documentclass[12pt]{amsart}
   \usepackage{amsmath,amsthm}
   \usepackage{amsfonts}   
   \usepackage{amssymb}    
   \usepackage{url}

\topmargin 0pt
\advance \topmargin by -\headheight
\advance \topmargin by -\headsep
      
\textheight 8.9in
     
\oddsidemargin 0pt
\evensidemargin \oddsidemargin
\marginparwidth 0.5in
     
\textwidth 6.5in

\usepackage[all]{xy}

\newtheorem{thm}{Theorem}[section]
\newtheorem{prop}[thm]{Proposition}

\newtheorem{cor}[thm]{Corollary}

\newtheorem{qu}[thm]{Question}

\theoremstyle{remark}
\newtheorem{rem}[thm]{Remark}
\newtheorem{ex}[thm]{Example}

\title{Exactly fillable contact structures without Stein fillings}
\author{Jonathan Bowden}
\address{Mathematisches Institut, Universit\"at Augsburg, Universit\"atsstr.\ 14, 86159 Augsburg, Germany}
\curraddr{Max-Planck-Institut f\"ur Mathematik, Vivatsgasse 7, 53129 Bonn, Germany}
\email{jonathan.bowden@math.uni-augsburg.de}
\date{\today}
\subjclass[2010]{Primary 57R17, 53D10; Secondary 32Q28}
\begin{document}

\maketitle

\begin{abstract}
We give examples of contact structures which admit exact symplectic fillings, but no Stein fillings, answering a question of Ghiggini.
\end{abstract}

\section{Introduction}
It is a fundamental problem is contact topology to determine which contact 3-manifolds admit symplectic fillings. There are many varieties of symplectic fillings. In addition to weak and strong fillings it is natural to consider contact manifolds that are exactly fillable and if the filling has a complex structure, then the natural class of symplectic fillings are those that are Stein. The relationship between these various notions is depicted in following sequence of inclusions:
\[ \{  \text{Stein fillable}  \} \subset \{  \text{Exactly fillable}  \} \subsetneq \{  \text{Strongly fillable}  \} \subsetneq \{  \text{Weakly fillable}  \} \subsetneq \{  \text{Tight}  \}. \]
Let us emphasise that a symplectic filling is always required to have connected boundary. Examples of strongly fillable contact structures that are not exactly fillable were found by Ghiggini, \cite{Ghi}. Examples of weakly fillable contact structures that admit no strong fillings were first discovered by Eliashberg, \cite{Eli3}. Finally, Etnyre and Honda, \cite{EtH} showed that there exist tight contact structures that are not weakly fillable. So all these inclusions are strict except possibly for the first. The main result of this paper is that the first inclusion is also strict.
\begin{thm}\label{Main_thm}
There exist exactly fillable contact structures that admit no Stein fillings.
\end{thm}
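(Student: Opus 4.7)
The plan is to exhibit a specific contact $3$-manifold $(M,\xi)$ together with an exact symplectic filling $W$, and then rule out the existence of \emph{any} Stein filling of $(M,\xi)$. The strategy has to exploit the fact that a Stein filling in dimension $4$ is topologically a handlebody built only from $0$-, $1$-, and $2$-handles, whereas an exact filling a priori admits more general topology. In addition, the gauge-theoretic and Heegaard Floer constraints imposed on Stein fillings can be strictly stronger than what is known for exact ones, and one wants to find a situation where this asymmetry is visible.

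For the construction of the exact filling I would start from a Stein filling of an auxiliary contact manifold $(M',\xi')$ and glue on an exact (but not Weinstein) symplectic cobordism with positive boundary $(M,\xi)$ and negative boundary $(M',\xi')$. Natural candidates for such cobordisms come from capping off a binding component of a supporting open book, from certain positive contact surgeries, or from the attachment of symplectic handles of index $3$ in the interior. Each of these operations preserves exactness of the filling while potentially introducing topology (such as $3$-handles, or a nontrivial $H_1$) that a Stein structure cannot accommodate. The output is an exact filling $W$ of $(M,\xi)$ whose diffeomorphism or homotopy type demonstrably lies outside the Stein range.

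To rule out Stein fillings of $(M,\xi)$ I would choose the contact manifold so that strong classification or obstruction results are available. Promising candidates are contact structures on Seifert fibered spaces, small Seifert manifolds, or links of isolated singularities, where the diffeomorphism types of Stein fillings have been classified (via Lisca, Ohta--Ono, Wendl, and related work), or where the contact Heegaard Floer or Seiberg--Witten invariants force tight algebraic restrictions. The main obstacle is the tension between the two requirements: $(M,\xi)$ must be rigid enough that Stein fillings can be classified and excluded, yet supple enough to admit a genuinely exact filling of the kind produced by the construction. I expect the most delicate technical step to be the verification that the glued filling remains exact (rather than merely strong), combined with a careful exclusion of \emph{all} Stein fillings of $(M,\xi)$ rather than just those arising from the obvious construction; this last point is where the deepest use of the available obstruction theory is likely to be required.
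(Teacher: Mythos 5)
There is a genuine gap here: your plan identifies the right tension (exact fillings can carry topology and invariants that Stein fillings cannot), but neither half of your proposed argument has a working mechanism, and both differ essentially from what the paper does. On the construction side, you propose to build the exact filling by gluing an exact cobordism on top of a Stein filling of an auxiliary $(M',\xi')$. This cannot produce the paper's examples, because the key summand $(\overline{\Sigma}(2,3,6n+5),\eta_{tan})$ is not exactly fillable at all (indeed not even a Stein filling of anything sits below it), so there is no auxiliary Stein filling to start from. The paper instead starts from a symplectic manifold with \emph{disconnected} convex boundary: the left-invariant forms on a compact quotient of $PSL(2,\mathbb{R})$ give an exact symplectic structure on $M\times[0,1]$ with both ends convex (Geiges, Mitsumatsu), pulled back to $\overline{\Sigma}(2,3,6n+5)\times[0,1]$ via a fibrewise covering of $ST^*B$; attaching a single symplectic $1$-handle joining the two boundary components then yields a connected exact filling of the connected sum $\overline{\Sigma}(2,3,6n+5)\,\#\,\Sigma(2,3,6n+5)$. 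This ``disconnected boundary plus $1$-handle'' step is the idea your outline is missing, and it is exactly what lets a non-exactly-fillable piece appear as a summand of an exactly fillable manifold.

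On the obstruction side, you correctly flag that the hard part is excluding \emph{all} Stein fillings, not just showing the constructed filling is non-Stein (the constructed one has $H_3\neq 0$, hence is not Stein, but that says nothing about other fillings). However, you offer no mechanism to close this gap beyond hoping a classification exists. The paper closes it with two specific inputs: (i) Eliashberg's splitting theorem (Theorem 16.7 of Cieliebak--Eliashberg), which says any Stein filling of a nontrivial contact connected sum decomposes as a boundary connected sum of Stein fillings of the summands, so it suffices to kill one summand; and (ii) Ghiggini's theorem that $\eta_{tan}$ on $\overline{\Sigma}(2,3,6n+5)$ ($n$ even) admits no Stein filling, which rests on $\eta_{tan}$ being isotopic to its conjugate and having $d_3=-\tfrac{3}{2}$. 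Your instinct to look at Seifert fibered spaces and Heegaard Floer obstructions points in the right direction (this is where Ghiggini's input lives), but without the connected-sum splitting theorem you have no way to transfer a non-fillability statement about a summand to the exactly fillable total space, and without the disconnected-boundary construction you have no exactly fillable total space containing a bad summand in the first place.
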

\noindent This answers a question raised by Ghiggini whilst studying the relationship between strong and Stein fillability (\cite{Ghi}, p.\ 1686). The contact structures of Theorem \ref{Main_thm} are obtained by using the fact that the Brieskorn spheres $\Sigma(2,3,6n+5)$ considered in \cite{Ghi} can be realised as coverings of Seifert fibred manifolds that are compact quotients of $PSL(2,\mathbb{R})$. One then constructs an exactly fillable contact structure on the connected sum $\overline{\Sigma}(2,3,6n+5) \# \Sigma(2,3,6n+5)$ using the $PSL(2,\mathbb{R})$-structure in an explicit way. By a result of Eliashberg a connected sum of contact manifolds is Stein fillable if and only if each of the summands is Stein fillable. However, the contact structure on $\overline{\Sigma}(2,3,6n+5)$ is Ghiggini's non-Stein fillable contact structure, which is in fact not even exactly fillable.

This then exhibits the failure of Eliashberg's result for exactly fillable contact structures and also implies that the notion of exact fillability is strictly weaker than that of exact semi-fillability, where one allows fillings with disconnected boundary. This latter fact is perhaps surprising since the notions of strong and weak fillability do not depend on whether one requires the boundaries of the fillings to be connected or not. Moreover, since the contact structures in question are perturbations of taut foliations, we further deduce that perturbations of taut foliations on homology spheres are not necessarily Stein fillable.

\subsection*{Acknowledgments:}
The author would like to thank T.\ Vogel and P.\ Ghiggini for helpful suggestions. He would also like to thank K.\ Cieliebak for pointing an error in an earlier version of this article and for providing an advanced version of \cite{CE}. The hospitality of the Max Planck Institut f\"ur Mathematik in Bonn, where part of this research was carried out, is gratefully acknowledged.

\subsection*{Conventions:}
All manifolds are smooth, oriented and connected and all contact structures will be assumed to be oriented and positive.

\section{Stein fillings of non-prime Manifolds}
In \cite{Eli2}, Eliashberg states a result about decomposing symplectic fillings of non-prime manifolds. For our purposes we will be content with the case of Stein fillings, for which a detailed proof can now be found in \cite{CE}.
\begin{thm}[\cite{Eli2}, Section 8, \cite{CE}, Theorem 16.7]\label{connect_Stein}
Let $X$ be a Stein filling with boundary $M = (M_1,\xi_1) \# (M_2,\xi_2)$ that decomposes as a non-trivial connected sum of contact manifolds. Then $X$ decomposes as a boundary connect sum $X = X_1 \#_{\partial} X_2$, where $X_1,X_2$ are Stein fillings of $(M_1,\xi_1)$ and $(M_2,\xi_2)$ respectively.
\end{thm}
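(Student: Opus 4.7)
The plan is to show that the embedded $2$-sphere $S \subset M$ realising the connected-sum decomposition bounds a properly embedded $3$-ball $B \subset X$, and then to cut $X$ along $B$ to produce the Stein fillings $X_1, X_2$. The existence of $B$ is obtained via the classical technique of filling $S$ by a moduli space of pseudoholomorphic disks, in the spirit of Bishop and of Eliashberg--Gromov's work on fillings of the standard contact $S^3$.

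First I would isotope $S$ so that its characteristic foliation has exactly two elliptic singularities $p_\pm$, with every other leaf running from $p_-$ to $p_+$, and arrange a standard local model of the Stein structure in a neighbourhood of $S$. Then I would choose an almost complex structure $J$ compatible with a plurisubharmonic exhausting function $\varphi$ defining the Stein structure. Bishop's theorem produces a one-parameter family of small embedded $J$-holomorphic disks near each of $p_\pm$ with boundaries on $S$. Gromov compactness extends both families, while the maximum principle for $\varphi$-plurisubharmonicity, combined with exactness of the Stein symplectic form, prevents the disks from escaping the interior, from touching $\partial X$ away from $S$, and from bubbling off closed spheres or boundary disks. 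The two families then close up into a compact $2$-sphere of parameters whose total space is the desired properly embedded $3$-ball $B \subset X$ with $\partial B = S$.

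Cutting $X$ along $B$ produces two $4$-manifolds $X_1', X_2'$ containing $B$ as a distinguished $3$-ball in their boundaries. Because $B$ is foliated by holomorphic disks and matches the standard Stein half-ball model on either side of $S$, capping off each $B \subset \partial X_i'$ by a standard Stein $4$-ball yields Stein fillings $X_i$ of $(M_i, \xi_i)$, and by construction $X = X_1 \#_\partial X_2$.

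The main obstacle is the analytic step: controlling the moduli space of holomorphic disks. One must rule out bubbling of closed $J$-holomorphic spheres and of secondary boundary disks, use positivity of intersection in dimension four to verify that the disks remain embedded and pairwise disjoint, and confirm that the local families emanating from $p_-$ and $p_+$ globally join into a single $S^2$-family. The Stein hypothesis is essential here: it provides both the $C^0$-bound from the exhausting function $\varphi$ and the exactness needed to suppress bubbles. Indeed, the main theorem of the present paper shows that no such decomposition can hold for arbitrary exact symplectic fillings, which is precisely what makes the failure of Theorem \ref{connect_Stein} in that broader setting possible.
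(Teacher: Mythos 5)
The paper gives no proof of this theorem---it is quoted directly from \cite{Eli2}, Section 8 and \cite{CE}, Theorem 16.7---and your outline is essentially the argument used in those references: make the separating sphere $S$ standard, fill it by a Bishop family of $J$-holomorphic discs controlled by the maximum principle and exactness, and split the Stein domain along the resulting properly embedded $3$-ball. One minor slip worth fixing: the parameter space of the disc family is a closed interval, not a $2$-sphere (the disc boundaries foliate $S$ minus the two elliptic points, so the leaf space is an arc whose endpoints are the constant discs at $p_\pm$), and after cutting along the $3$-ball each piece already has boundary $M_i$, so the ``capping by a Stein $4$-ball'' step is really the corner-smoothing/pseudoconvexification of the new boundary face rather than a topological capping.
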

The connected sum operation is well-defined on tight contact manifolds by \cite{Col}. Moreover, if both $(M_1,\xi_1)$ and $(M_2,\xi_2)$ are Stein fillable, then attaching a Stein 1-handle yields a Stein filling of the contact connected sum. In this way Theorem \ref{connect_Stein} implies the following as a corollary. 
\begin{cor}\label{connect_Stein_rem}
A connected sum of contact manifolds $(M_1,\xi_1) \# (M_2,\xi_2)$ is Stein fillable if and only if $(M_1,\xi_1)$ and  $(M_2,\xi_2)$ are Stein fillable.
\end{cor}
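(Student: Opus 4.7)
The plan is to verify the two directions of the biconditional separately, invoking Theorem \ref{connect_Stein} for one direction and standard Stein 1-handle attachment for the other.

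For the ``only if'' direction, I would apply Theorem \ref{connect_Stein} directly. If $(M_1,\xi_1) \# (M_2,\xi_2)$ admits a Stein filling $X$, then $X$ decomposes as a boundary connect sum $X_1 \natural X_2$ where $X_i$ is a Stein filling of $(M_i,\xi_i)$, so each summand is Stein fillable.

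For the ``if'' direction, suppose $X_1$ and $X_2$ are Stein fillings of $(M_1,\xi_1)$ and $(M_2,\xi_2)$ respectively. I would first invoke the theorem of Colin cited in the paper to guarantee that the contact connected sum $(M_1,\xi_1) \# (M_2,\xi_2)$ is well defined. Then I would form the boundary connect sum $X_1 \natural X_2$ by attaching a Stein 1-handle to the disjoint union $X_1 \sqcup X_2$ with its product Stein structure, using Eliashberg's Stein handle attachment result. Since attaching a subcritical (index 1) Weinstein/Stein handle preserves the Stein property, $X_1 \natural X_2$ is a Stein manifold whose boundary carries the contact connected sum of $\xi_1$ and $\xi_2$, giving the required Stein filling.

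There is essentially no main obstacle here: both implications are immediate from results already stated or standard in the literature. The only small subtlety is making sure that the contact structure induced on the boundary after the 1-handle attachment agrees with the contact connected sum $\xi_1 \# \xi_2$ as defined by Colin, but this is exactly the content of how contact connected sums are constructed via subcritical Stein handles, so no further argument is needed.
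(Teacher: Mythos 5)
Your proposal is correct and matches the paper's argument exactly: the ``only if'' direction is an immediate application of Theorem \ref{connect_Stein}, and the ``if'' direction follows by attaching a Stein $1$-handle to the disjoint union of the two fillings, with Colin's theorem guaranteeing the connected sum is well defined. No issues.
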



\section{Non-Stein Exact Symplectic Fillings}
We use a construction which goes back to McDuff, \cite{McD} to construct many examples of exact symplectic fillings $(X, d\lambda)$ with $H_3(X) \neq 0$. Since the third homology is non-trivial, these fillings, although exact, cannot be Stein. The starting point for the construction is an exact symplectic filling of the form $(M \times [0,1], d \lambda)$ both of whose ends are convex, which can, for example, be obtained by considering compact quotients of $PSL(2, \mathbb{R})$. This was first observed by Geiges and independently by Mitsumatsu.
\begin{ex}[\cite{Gei}, \cite{Mit2}]\label{psl_ex}
Let $\mathfrak{psl}_2$ denote the lie algebra of $PSL(2, \mathbb{R})$ and choose the following basis:
\[h = \frac{1}{2}\left( \begin{array}{cc}
1 & 0  \\
0 & -1  \\
 \end{array} \right), \ l = \frac{1}{2}\left( \begin{array}{cc}
0 & 1  \\
1 & 0  \\
 \end{array} \right),  \ k = \frac{1}{2}\left( \begin{array}{cc}
0 & -1  \\
1 & 0  \\
 \end{array} \right). \]
We identify $\mathfrak{psl}^*_2$ with the space of left-invariant 1-forms on $PSL(2, \mathbb{R})$ and define a linking pairing by
\[LK(\alpha,\beta) = \alpha \wedge d \beta.\]
With respect to this pairing the ordered basis $\{h^*,l^*,k^*\}$ is orthogonal and
\[LK(h^*,h^*) = LK(l^*,l^*) =  -1  \text{ and } LK(k^*,k^*) = 1.\]
Any non-zero 1-form $\alpha \in \mathfrak{psl}^*_2$ then defines a positive resp.\ negative contact structure or a taut foliation, depending on whether $LK(\alpha,\alpha)$ is positive resp.\ negative or zero. We let $\Gamma$ be a co-compact lattice in $PSL(2, \mathbb{R})$ and consider $M =  PSL(2, \mathbb{R})/ \Gamma$. If we set \[ \lambda = t k^* + (1-t)h^*\]on $M \times [0,1]$, then the pair $(M \times [0,1], d \lambda)$ is a symplectic filling with convex ends.
\end{ex}
Other examples of symplectic structures on $M \times [0,1]$ with convex ends are given by $T^2$-bundles over $S^1$ with Anosov monodromy or by smooth volume preserving Anosov flows (cf.\ \cite{Mit2}). It is now easy to construct examples of non-Stein exact fillings: one simply attaches a symplectic $1$-handle to $(M \times [0,1], d \lambda)$ with ends in each component of the boundary. 
\begin{prop}[\cite{McD}]\label{non_Stein_exact}
There exist exact, non-Stein symplectic fillings.
\end{prop}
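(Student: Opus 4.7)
The plan is to execute the construction sketched in the paragraph preceding the proposition. Starting from the Liouville domain $(W,d\lambda)=(M\times[0,1],d\lambda)$ of Example \ref{psl_ex}, whose boundary is a disjoint union of two convex contact manifolds $(M_+,\xi_+)\sqcup(M_-,\xi_-)$, I attach a symplectic (Weinstein) $1$-handle $H$ with one foot on $M_+$ and the other on $M_-$. The resulting $X=W\cup H$ has connected convex boundary, canonically identified with the contact connected sum $(M_+,\xi_+)\#(M_-,\xi_-)$. One must verify that $\lambda$ extends over $H$ as a Liouville form; this is routine because the handle is subcritical (index $1<2=\dim_{\mathbb{R}} X/2$), so one can glue in the standard model $1$-handle in $(\mathbb{R}^4,\lambda_{\mathrm{std}})$ along neighbourhoods of the two attaching points, adjusting $\lambda$ by an exact form in a collar if necessary. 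This produces the desired exact filling $(X,d\lambda')$.

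Next I would show that $X$ cannot be Stein by a homological argument. Since $W=M\times[0,1]$ deformation retracts onto the closed oriented $3$-manifold $M$, we have $H_3(W;\mathbb{Z})\cong H_3(M;\mathbb{Z})\cong\mathbb{Z}$, with generator $[M\times\{t\}]$. The $1$-handle $H$ has the homotopy type of a $1$-cell, so Mayer--Vietoris gives $H_k(X;\mathbb{Z})\cong H_k(W;\mathbb{Z})$ for all $k\geq 2$; in particular $H_3(X;\mathbb{Z})\cong\mathbb{Z}\neq 0$.

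To conclude, I invoke the theorem of Andreotti--Frankel: any Stein manifold of complex dimension $n$ has the homotopy type of a CW complex of real dimension at most $n$. A Stein filling of a $3$-manifold has complex dimension $2$, so its homology vanishes above degree $2$. Since $H_3(X)\neq 0$, the filling $X$ is exact but admits no compatible Stein structure.

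The one step requiring care is the Liouville extension across the $1$-handle; every other ingredient is either general nonsense (Mayer--Vietoris, Andreotti--Frankel) or already contained in Example \ref{psl_ex}. Since symplectic $1$-handle attachment in dimension four is by now classical, I do not anticipate any serious obstacle.
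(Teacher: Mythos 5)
Your proposal is correct and follows essentially the same route as the paper: attach a symplectic $1$-handle joining the two convex ends of $(M\times[0,1],d\lambda)$ and rule out a Stein structure because $H_3\neq 0$ (the paper phrases this via the non-separating hypersurface $M\times\{\tfrac12\}$, you via Mayer--Vietoris and Andreotti--Frankel, which is the same point). The only cosmetic difference is that you justify exactness of the extended form by the standard subcritical Weinstein handle model, whereas the paper uses the long exact sequence of the pair $(\widetilde{X},X)$; both are fine.
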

\begin{proof}
Let $(M \times [0,1], d \lambda) = (X, \omega)$ be an exact symplectic filling with convex ends. Attach a symplectic $1$-handle to obtain a filling of the connected sum $(\overline{M}, \xi_0) \# (M, \xi_1)$. We denote this new filling by $\widetilde{X} = X \cup e_1$, where $e_1$ denotes a topological $1$-handle. The symplectic form on $\widetilde{X}$ restricts to $\omega$ on $X$. Thus by the long exact sequence in cohomology of the pair $(\widetilde{X}, X)$ we see that $\widetilde{X}$ is an exact filling and the hypersurface $M \times \frac{1}{2} \subset \widetilde{X}$ is non-separating, whence $H_3(\widetilde{X}) \neq 0$ and $\widetilde{X}$ cannot be Stein.
\end{proof}
The manifolds $(N,\xi) = (\overline{M}, \xi_0) \# (M,\xi_1)$ in Proposition \ref{non_Stein_exact} are always exactly fillable, but their natural fillings are not Stein. This raises the question of whether they are always Stein fillable or not. Or equivalently whether $(\overline{M}, \xi_0)$ and $(M,\xi_1)$ are always Stein fillable. We will answer this question, by considering various Brieskorn spheres, which can be realised as finite covers of compact quotients of $PSL(2,\mathbb{R})$. 

Note that it is not sufficient to take McDuff's original examples where $M = ST^*\Sigma$ is the unit cotangent bundle of a surface of genus at least 2. For in this case the contact structures one obtains on $M$ resp.\ $\overline{M}$ are isotopic to the canonical contact structure $\xi_{can}$ on $M$ and a horizontal contact structure $\xi_{LC}$ given by the Levi-Civita connection of a hyperbolic metric on $\Sigma$. The canonical contact structure always admits Weinstein, and hence Stein, fillings (\cite{CE}, Example 11.12) and by the classification of contact structures on $S^1$-bundles (see \cite{Gir2}, \cite{Ho}) the same is true of $\xi_{LC}$. Thus the resulting contact structure on the connected sum is also Stein fillable.

\section{Brieskorn Spheres and $PSL(2,\mathbb{R})$-structures}\label{main_result}
We consider the Brieskorn spheres $\overline{\Sigma}(2,3,6n+5)$ taken with the opposite orientation to that given by their description as the link of the complex singularity $z_1^{2} +z_2^3 + z_3^{6n+5} = 0$. These are Seifert fibred homology spheres, whose quotient orbifolds are hyperbolic for any natural number $n$ greater than one.

The manifold $\overline{\Sigma}(2,3,6n+5)$ admits a contact structure $\eta_{tan}$ that is tangential to the Seifert fibration (cf.\ \cite{Mas}, p.\ 1764). This contact structure has the property that it is isotopic to its conjugate $\overline{\eta}_{tan}$, which denotes the same contact structure taken with the opposite orientation, and we note this in the following proposition.
\begin{prop}\label{bundle_cover}
For any natural number $n \geq 1$ the manifold $\overline{\Sigma}(2,3,6n+5)$ admits a tangential contact structure $\eta_{tan}$. Moreover, any tangential contact structure is isotopic to its conjugate and is universally tight.
\end{prop}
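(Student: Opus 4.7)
The plan is to leverage the Lie-algebraic picture of Example~\ref{psl_ex} together with the fact that, for $n \geq 1$, the base orbifold $S^2(2,3,6n+5)$ is hyperbolic. Consequently $\overline{\Sigma}(2,3,6n+5)$ is modelled on the Thurston geometry $\widetilde{PSL(2,\mathbb{R})}$ and, with a suitable orientation convention, can be written as a cocompact quotient $\widetilde{PSL(2,\mathbb{R})}/\Gamma$ in which left-invariant differential forms descend. The left-invariant form $h^*$ annihilates the fibre direction $k$, so its kernel is tangent to the Seifert fibration; since $LK(h^*,h^*) = -1 \neq 0$, this kernel is a contact structure, which we take as $\eta_{tan}$.

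For the isotopy with the conjugate, I would consider the $1$-parameter family of left-invariant $1$-forms
\[
\alpha_\theta = \cos\theta \cdot h^* + \sin\theta \cdot l^*, \qquad \theta \in [0,\pi].
\]
Each $\alpha_\theta$ still kills $k$, so its kernel is tangential, and expanding the quadratic form $\alpha \mapsto \alpha \wedge d\alpha$ bilinearly while using the orthogonality relations of Example~\ref{psl_ex} yields $LK(\alpha_\theta,\alpha_\theta) = -\cos^2\theta - \sin^2\theta = -1$; hence every $\alpha_\theta$ is a contact form. Since $\alpha_0 = h^*$ and $\alpha_\pi = -h^*$ realise the same plane field with opposite coorientations, this continuous path of contact structures together with Gray's stability theorem yields an ambient isotopy from $\eta_{tan}$ to $\bar\eta_{tan}$. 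To upgrade this to the claim that \emph{every} tangential contact structure is isotopic to its conjugate, I would appeal to the classification of tangential contact structures on hyperbolic Seifert manifolds developed in \cite{Mas}, which shows that any such structure is isotopic through tangential ones to the left-invariant model just exhibited.

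Universal tightness I would extract similarly from \cite{Mas}, which establishes this property for tangential contact structures on Seifert manifolds with hyperbolic base orbifold. The main obstacle is exactly this last assertion: existence and the isotopy with the conjugate reduce to the linear algebra of the linking pairing on $\mathfrak{psl}_2^*$, whereas universal tightness is a genuine contact-topological property, requiring control over overtwisted disks in every (including infinite) cover, and this is where the substance of the proof resides.
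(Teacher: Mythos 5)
Your proposal is correct and follows essentially the same route as the paper: both arguments reduce existence and the isotopy to the conjugate to the linear algebra of the linking pairing on the tangential subspace $\langle h^*, l^*\rangle$ (your explicit path $\alpha_\theta$ is just the paper's connectedness argument made concrete), and both delegate the classification of tangential structures and universal tightness to \cite{Mas}. The only cosmetic difference is that you realise $\overline{\Sigma}(2,3,6n+5)$ directly as a quotient of $\widetilde{PSL(2,\mathbb{R})}$, whereas the paper pulls everything back from $ST^*B$ via the fibrewise covering of (\cite{Mas}, Proposition 8.9) and obtains universal tightness by first perturbing to a horizontal structure and citing (\cite{Mas}, Theorem A).
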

\begin{proof}
We let $B$ denote the quotient orbifold of $\overline{\Sigma}(2,3,6n+5)$ given by the Seifert fibration. A tangential contact structure $\eta_{tan}$ induces a fibrewise cover $\overline{\Sigma}(2,3,6n+5) \to ST^*B$ to the unit cotangent bundle of the orbifold $B$ so that $\eta_{tan}$ is the pullback of the canonical contact structure $\xi_{can}$ on $ST^*B$ by (\cite{Mas}, Proposition 8.9). By assumption $B$ is a hyperbolic orbifold and hence $ST^*B$ is a compact quotient of $PSL(2,\mathbb{R})$ by a discrete lattice. Furthermore $\xi_{can}$ comes from a left-invariant contact structure on $PSL(2,\mathbb{R})$, which is the kernel of some left-invariant $1$-form, where we have identified $PSL(2,\mathbb{R})$ with $ST^*\mathbb{H}^2$ using the action of $PSL(2,\mathbb{R})$ on $\mathbb{H}^2$ via M\"obius transformations.

Using the notation of Example \ref{psl_ex}, any left-invariant $1$-form that is tangential lies in the span of $h^*$ and $l^*$, since $k$ generates the circle action. Moreover, since the linking form is negative definite on the span of $h^*$ and $l^*$, any non-zero form that is tangential determines a contact structure. Hence the space of tangential $PSL(2,\mathbb{R})$-invariant contact structures is connected, so in particular $\xi_{can}$ is isotopic to its conjugate and the same then holds for $\eta_{tan}$ by taking pullbacks. In general any tangential contact structure can be perturbed to a horizontal contact structure, which is then universally tight by (\cite{Mas}, Theorem A).
\end{proof}

\noindent Ghiggini has shown that $\overline{\Sigma}(2,3,6n+5)$ admits a contact structure which is strongly fillable, but admits no Stein fillings, when $n$ is even (\cite{Ghi}, Theorem 1.5). The only properties of the contact structures used in Ghiggini's proof of non-Stein fillability is that they are isotopic to their conjugates and that their $d_3$-invariant is $-\frac{3}{2}$. However, it follows from the classification of \cite{GhV} that all tight contact structures on $\overline{\Sigma}(2,3,6n+5)$ satisfy this latter constraint, thus in view of Proposition \ref{bundle_cover} we deduce the following.
\begin{thm}[\cite{Ghi}]\label{non_fill}
If $n$ is even, then a tangential contact structure $\eta_{tan}$ on $\overline{\Sigma}(2,3,6n+5)$ does not admit any Stein fillings.
\end{thm}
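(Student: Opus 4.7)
The plan is to reduce the theorem to Ghiggini's non--Stein-fillability result from \cite{Ghi}. As the preceding discussion indicates, although Ghiggini's Theorem~1.5 is stated for a specific strongly fillable contact structure on $\overline{\Sigma}(2,3,6n+5)$ (with $n$ even), its proof uses only two abstract properties of the contact structure: that it is isotopic to its conjugate, and that its $d_{3}$-invariant equals $-\tfrac{3}{2}$. My strategy is therefore to verify both of these for $\eta_{tan}$ and then cite Ghiggini.

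The first property is precisely the content of Proposition~\ref{bundle_cover}, which I would apply directly. The underlying mechanism is that on the quotient $ST^{*}B$ the tangential $PSL(2,\mathbb{R})$-invariant $1$-forms fill the two-dimensional subspace $\langle h^{*},l^{*}\rangle$, on which the linking form is negative definite; all such forms thus give (negative) contact structures which vary in a connected family. Pulling such a path of contact structures back through the fibrewise covering $\overline{\Sigma}(2,3,6n+5)\to ST^{*}B$ connects $\eta_{tan}$ to $\overline{\eta}_{tan}$.

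For the second property, I would first observe that $\eta_{tan}$ is tight, in fact universally tight, by the final sentence of Proposition~\ref{bundle_cover}: tangential contact structures on a Seifert fibred manifold can be perturbed to horizontal ones, to which the universal tightness criterion of \cite{Mas} applies. The classification of tight contact structures on $\overline{\Sigma}(2,3,6n+5)$ worked out in \cite{GhV} then shows that every tight contact structure on this manifold has $d_{3}=-\tfrac{3}{2}$; in particular $\eta_{tan}$ does.

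The principal obstacle --- and the only non-formal step --- is confirming by inspection of \cite{Ghi} that Ghiggini's argument really depends only on the conjugation-invariance and the value of $d_{3}$, and not on any additional feature specific to the fillable structure he considered. Once this reading of \cite{Ghi} is in hand, the combination of Proposition~\ref{bundle_cover} with the classification in \cite{GhV} supplies exactly the input Ghiggini's proof requires, and the theorem follows.
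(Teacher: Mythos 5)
Your proposal is correct and follows essentially the same route as the paper: the paper likewise observes that Ghiggini's argument uses only conjugation-invariance (supplied by Proposition~\ref{bundle_cover}) and the value $d_3=-\tfrac{3}{2}$ (supplied by the classification in \cite{GhV} applied to the tight structure $\eta_{tan}$). Your extra step of explicitly checking that $\eta_{tan}$ is (universally) tight before invoking the classification is a sensible precaution that the paper leaves implicit.
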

\begin{rem}
It is possible to deduce the fact that $d_3(\eta_{tan}) = -\frac{3}{2}$ without using the full force of the classification given in \cite{GhV}. One only needs the classification of contact structures with twisting number $-5$, for which the Heegaard-Floer computations used in \cite{Ghi} suffice, and the fact that at least one of these must be horizontal (\cite{Mas}, p.\ 1764).

One can further show that the contact structure $\eta_{tan}$ corresponds to $\eta_{n-1,0}$ in terms of the classification of tight contact structures on $\overline{\Sigma}(2,3,6n+5)$ given in \cite{GhV}. In this way, Theorem \ref{non_fill} also holds for $n$ odd and at least $2$ by (\cite{LiS}, Theorem 1.8).
\end{rem}
As pointed out to us by C. Wendl, the fact that $(\overline{\Sigma}(2,3,6n+5), \eta_{tan})$ can be realised as a boundary component of an exact filling with disconnected boundary shows that the notion of exact semi-fillability is strictly weaker than that of exact fillability, unlike for its weak and strong counterparts. We note this in the following proposition.
\begin{prop}\label{semi_false}
There exist infinitely many contact manifolds that are exactly semi-fillable but admit no exact fillings.
\end{prop}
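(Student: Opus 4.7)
The strategy is to take as the infinite family the contact manifolds $(\overline{\Sigma}(2,3,6n+5),\eta_{tan})$ for $n \geq 2$, which are pairwise non-diffeomorphic and hence produce infinitely many distinct examples, and to establish two things for each: that it is exactly semi-fillable, and that it admits no exact filling.

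For semi-fillability, I would pull back the exact symplectic structure $d\lambda$ on $M \times [0,1]$ from Example \ref{psl_ex} along $p \times \mathrm{id}$, where $p \colon \overline{\Sigma}(2,3,6n+5) \to M = ST^*B$ is the finite fibrewise cover constructed in the proof of Proposition \ref{bundle_cover}. Since $p$ is a local diffeomorphism, the pulled-back form $(p \times \mathrm{id})^* d\lambda$ is still symplectic and exact, and convexity of the two ends is preserved. At the $t=0$ boundary the induced contact form is the pullback of $h^*$ to the cover, which is tangential to the Seifert fibration on $\overline{\Sigma}(2,3,6n+5)$. Invoking the self-conjugacy statement $\eta_{tan} \cong \overline{\eta}_{tan}$ from Proposition \ref{bundle_cover} to fix the orientation issue, this convex boundary component is contactomorphic to $(\overline{\Sigma}(2,3,6n+5),\eta_{tan})$. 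Hence the contact manifold appears as one convex component of an exact symplectic filling with disconnected boundary, establishing exact semi-fillability.

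The main obstacle is ruling out \emph{every} exact filling, which is genuinely stronger than the Stein obstruction of Theorem \ref{non_fill}. My plan here is to upgrade Ghiggini's proof from the Stein setting to the exact setting. Ghiggini's argument in \cite{Ghi} uses only two structural inputs about $\eta_{tan}$: the self-conjugacy $\eta_{tan} \cong \overline{\eta}_{tan}$ and the numerical identity $d_3(\eta_{tan}) = -\tfrac{3}{2}$; from these he extracts a contradiction via the Ozsv\'ath--Szab\'o contact invariant and an intersection-form analysis. Since the $d_3$-invariant, a compatible almost-complex structure, the induced Spin$^c$ datum, and the Ozsv\'ath--Szab\'o contact-class obstruction all make sense for an arbitrary symplectic filling, the task reduces to verifying that Ghiggini's argument never secretly uses more than exactness—in particular, that the signature and $b_2^+$ constraints on the filling can be extracted without appealing to a plurisubharmonic function or a handle decomposition. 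This verification is the hardest part; once in hand, the combination of the semi-filling constructed above with the impossibility of an exact filling yields the proposition for every $n\geq 2$ covered by Theorem \ref{non_fill} and its subsequent remark.
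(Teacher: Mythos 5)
Your proposal matches the paper's (implicit) argument exactly: the same family $(\overline{\Sigma}(2,3,6n+5),\eta_{tan})$, the same exact semi-filling obtained by pulling back the $PSL(2,\mathbb{R})$-invariant exact symplectic structure on $ST^*B\times[0,1]$ along the fibrewise cover, and the same appeal to the fact that Ghiggini's obstruction excludes all exact fillings rather than only Stein ones. The one step you flag as ``the hardest part''---verifying that Ghiggini's argument uses nothing beyond exactness of the filling---is precisely what the paper disposes of by citation to \cite{Ghi}, p.~1685, where this strengthening is already observed, so no new work is required there.
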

\noindent With these preliminaries we may now construct examples of exactly fillable contact structures that admit no Stein fillings.
\begin{thm}\label{non_Stein_Exact}
There exist infinitely many exactly fillable contact manifolds that do not admit Stein fillings.
\end{thm}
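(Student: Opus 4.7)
The plan is to exhibit the required non-Stein exact filling by combining Example \ref{psl_ex}, the covering $\overline{\Sigma}(2,3,6n+5) \to ST^*B$ of Proposition \ref{bundle_cover}, and the McDuff $1$-handle construction of Proposition \ref{non_Stein_exact}. First I would choose a cocompact lattice $\Gamma \subset PSL(2,\mathbb{R})$ so that $M := PSL(2,\mathbb{R})/\Gamma$ satisfies $\overline{M} = ST^*B$, where $B$ is the hyperbolic orbifold base of the Seifert fibration on $\overline{\Sigma}(2,3,6n+5)$; Proposition \ref{bundle_cover} then provides a finite fibrewise cover $\overline{\Sigma}(2,3,6n+5) \to \overline{M}$. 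Applied to this $M$ with Liouville form $\lambda = t k^* + (1-t) h^*$, Example \ref{psl_ex} yields an exact filling $(M \times [0,1], d\lambda)$ with convex boundary $(\overline{M}, \ker h^*) \sqcup (M, \ker k^*)$.

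Next I would lift this filling via the finite cover $\Sigma(2,3,6n+5) \to M$ to obtain an exact filling $(\Sigma(2,3,6n+5) \times [0,1], d\widetilde{\lambda})$ whose convex ends are $(\overline{\Sigma}(2,3,6n+5), \widetilde{\ker h^*})$ and $(\Sigma(2,3,6n+5), \widetilde{\ker k^*})$. The key identification is that $\widetilde{\ker h^*}$ is isotopic to $\eta_{tan}$: the set of left-invariant tangential forms $\mathrm{span}(h^*,l^*) \setminus \{0\}$ is connected and (by the negative-definiteness of $LK$ recalled in Proposition \ref{bundle_cover}) consists entirely of contact forms, so $\ker h^*$ on $\overline{M} = ST^*B$ is isotopic through contact structures to $\xi_{can}$; pulling this isotopy back to the cover identifies $\widetilde{\ker h^*}$ with $\eta_{tan}$ via Gray's theorem. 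Attaching a symplectic $1$-handle joining the two boundary components as in the proof of Proposition \ref{non_Stein_exact} then yields an exact filling of the contact connected sum $(\overline{\Sigma}(2,3,6n+5), \eta_{tan}) \# (\Sigma(2,3,6n+5), \xi')$ for some positive tight contact structure $\xi'$, whose precise isotopy class is irrelevant for what follows.

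To conclude, I would apply Corollary \ref{connect_Stein_rem}: any Stein filling of this connected sum would descend to a Stein filling of $(\overline{\Sigma}(2,3,6n+5), \eta_{tan})$, contradicting Theorem \ref{non_fill} for $n$ even (and more generally for all $n \geq 2$ by the subsequent remark). Varying $n$ produces pairwise non-homeomorphic $3$-manifolds $\overline{\Sigma}(2,3,6n+5) \# \Sigma(2,3,6n+5)$ and hence infinitely many distinct examples. The main technical obstacle I anticipate is the orientation bookkeeping: one must verify that the orientation on $M$ for which $\lambda = tk^* + (1-t)h^*$ is a Liouville form with two convex ends is \emph{opposite} to the orientation on $ST^*B$ in which $\xi_{can}$ is positive, so that $\overline{M}$ coincides with $ST^*B$ and the non-Stein-fillable summand $\eta_{tan}$ indeed lands on the $\overline{\Sigma}(2,3,6n+5)$-side of the connected sum rather than on the $\Sigma(2,3,6n+5)$-side. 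This should follow directly by comparing the signs of $h^* \wedge dh^*$ and $k^* \wedge dk^*$ against the linking-form computation in Example \ref{psl_ex}.
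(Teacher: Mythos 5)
Your proposal is correct and follows essentially the same route as the paper: pull back the $PSL(2,\mathbb{R})$-filling of $ST^*B \times [0,1]$ to the Brieskorn sphere, attach a symplectic $1$-handle as in Proposition \ref{non_Stein_exact}, and combine Corollary \ref{connect_Stein_rem} with Theorem \ref{non_fill}. The additional care you take with orientations and with identifying the tangential summand with $\eta_{tan}$ is detail the paper leaves implicit --- it only records that $\xi_0$ is tangential by construction, which is all that Theorem \ref{non_fill} requires.
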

\begin{proof}
We consider the fibrewise covering $\overline{\Sigma}(2,3,6n+5) \to ST^*B$ given in the proof of Proposition \ref{bundle_cover}. Since $ST^*B$ admits a $PSL(2,\mathbb{R})$-structure, the product $ST^*B \times [0,1]$ can be made into an exact symplectic filling with convex ends as in Example \ref{psl_ex} and by taking pullbacks the same is true of $\overline{\Sigma}(2,3,6n+5) \times [0,1]$. 

We let $\xi = \xi_0 \# \xi_1$ be the contact structure on $\overline{\Sigma}(2,3,6n+5) \# \Sigma(2,3,6n+5)$ given by attaching a symplectic 1-handle as in Proposition \ref{non_Stein_exact} and note that $\xi_0$ is tangential by construction. Then by Corollary \ref{connect_Stein_rem} we have that $\xi$ is Stein fillable if and only if $\xi_0$ and $\xi_1$ are Stein fillable. However, if $n$ is even the contact structure $\xi_0$ is not Stein fillable by Theorem \ref{non_fill} and it follows that $\xi$ is exactly fillable, but not Stein fillable.
\end{proof}
\noindent In fact, the argument used to show that $\eta_{tan}$ is not Stein fillable actually shows that it is not even exactly fillable (\cite{Ghi}, p.\ 1685). In view of the examples used in Theorem \ref{non_Stein_Exact} this then exhibits the failure of the analogue of Corollary \ref{connect_Stein_rem} for exactly fillable contact structures. 
\begin{cor}\label{connect_fail}
There exist infinitely many contact manifolds $(M_1,\xi_1),(M_2,\xi_2)$ such that the contact connected sum $(M_1\#M_2,\xi_1 \# \xi_2)$ is exactly fillable but $(M_1,\xi_1)$ is not.
\end{cor}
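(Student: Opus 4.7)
The plan is to simply reinterpret the examples already constructed in Theorem \ref{non_Stein_Exact}, substituting Ghiggini's stronger non-fillability statement for the non-Stein-fillability statement used there. Concretely, for each even integer $n \geq 2$, I would take
\[ (M_1,\xi_1) = (\overline{\Sigma}(2,3,6n+5), \eta_{tan}), \qquad (M_2,\xi_2) = (\Sigma(2,3,6n+5), \xi_1), \]
where $\xi_1$ is the contact structure on the second factor coming from the construction of Theorem \ref{non_Stein_Exact}. Theorem \ref{non_Stein_Exact} already shows that the contact connected sum $(M_1 \# M_2, \xi_1 \# \xi_2)$ is exactly fillable, so the only remaining point is to verify that $(M_1,\xi_1)$ itself is not exactly fillable.

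For the latter, I would invoke Ghiggini's argument (\cite{Ghi}, p.\ 1685), which the excerpt has already flagged as yielding the stronger conclusion that $\eta_{tan}$ is not exactly fillable, not merely not Stein fillable. Combining Proposition \ref{bundle_cover} (the tangential contact structure is isotopic to its conjugate) with the $d_3$-invariant computation already used in the proof of Theorem \ref{non_fill}, Ghiggini's obstruction applies verbatim to rule out any exact filling when $n$ is even.

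Finally, to get infinitely many examples, I would observe that varying the parameter $n$ over even integers $\geq 2$ produces pairwise non-diffeomorphic underlying $3$-manifolds $M_1 \# M_2$, since the Brieskorn spheres $\Sigma(2,3,6n+5)$ are pairwise distinct (for instance, they have distinct Casson invariants). Hence one obtains infinitely many pairs $(M_1,\xi_1),(M_2,\xi_2)$ with the required property.

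The only nontrivial input is the upgrade from ``not Stein fillable'' to ``not exactly fillable''; all of it is already contained in Ghiggini's paper, so there is no genuine obstacle in the argument beyond correctly citing his stronger statement.
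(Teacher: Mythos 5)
Your proposal is correct and follows exactly the route the paper takes: it reuses the examples of Theorem \ref{non_Stein_Exact} and upgrades the obstruction on $\overline{\Sigma}(2,3,6n+5)$ from ``not Stein fillable'' to ``not exactly fillable'' via Ghiggini's argument, with infinitely many examples coming from varying even $n$. The only (cosmetic) issue is the notational clash of using $\xi_1$ both for the first summand's contact structure and for the structure on $\Sigma(2,3,6n+5)$ inherited from the construction.
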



\noindent Since the contact structure $\eta_{tan}$ is isotopic to a deformation of a taut foliation, we further deduce the following.
\begin{cor}
There exist infinitely many contact structures that are deformations of taut foliations on homology spheres, which are not Stein fillable.
\end{cor}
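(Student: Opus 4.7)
The plan is to verify that the family $\{(\overline{\Sigma}(2,3,6n+5),\eta_{tan})\}$ for even $n\geq 2$ already supplies the desired infinite family. Each Brieskorn sphere $\overline{\Sigma}(2,3,6n+5)$ is an integer homology sphere, so the underlying manifolds are homology spheres, and varying $n$ provides an infinite family. Non-Stein fillability of each $\eta_{tan}$ is furnished directly by Theorem \ref{non_fill}. The only remaining claim is that each $\eta_{tan}$ is isotopic to a deformation of a taut foliation, which is the assertion recorded in the paragraph preceding the corollary.

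To establish this claim I would work on the Seifert base $ST^*B$ using the left-invariant description of Example \ref{psl_ex}. A left-invariant $1$-form $\alpha h^*+\beta l^*+\gamma k^*$ satisfies $LK=0$ precisely when $\gamma^2=\alpha^2+\beta^2$, so $\alpha_0=h^*+k^*$ defines a left-invariant foliation on $PSL(2,\mathbb{R})/\Gamma$ --- concretely, the weak stable foliation of the geodesic flow. The straight-line path $\alpha_t=h^*+(1-t)k^*$ connects $\alpha_0$ to the tangential form $h^*$ defining $\xi_{can}$, and for every $t\in(0,1]$ one has $LK(\alpha_t,\alpha_t)=-1+(1-t)^2<0$, so each intermediate form is contact. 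Pulling back via the fibrewise cover $\overline{\Sigma}(2,3,6n+5)\to ST^*B$ of Proposition \ref{bundle_cover} transports the whole family to the Brieskorn sphere, producing an explicit deformation from a taut foliation through contact structures to $\eta_{tan}$.

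The one point requiring care is verifying that the pulled-back foliation on $\overline{\Sigma}(2,3,6n+5)$ is genuinely taut. On $PSL(2,\mathbb{R})/\Gamma$ for cocompact $\Gamma$ the weak stable foliation is transverse to the volume-preserving Anosov geodesic flow, which immediately witnesses tautness. Since tautness is preserved under finite covers, the pullback to the Brieskorn sphere is again taut. Combined with the contact-family construction above and Theorem \ref{non_fill}, this yields the corollary.
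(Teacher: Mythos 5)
Your overall strategy is exactly the one the paper intends (the paper gives no proof of this corollary beyond the one-sentence remark preceding it, so filling in the linear path of left-invariant forms is the right thing to do), and most of the details check out: the Brieskorn spheres are integral homology spheres, Theorem \ref{non_fill} gives non-Stein-fillability for $n$ even, the path $\alpha_t=h^*+(1-t)k^*$ has $LK(\alpha_t,\alpha_t)=-1+(1-t)^2<0$ for $t\in(0,1]$ and $=0$ at $t=0$, and the endpoint $\ker h^*$ is a tangential invariant contact structure, hence isotopic to the one pulling back to $\eta_{tan}$ by the connectedness argument in Proposition \ref{bundle_cover}.

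However, your justification of tautness contains a genuine error: you assert that the weak stable foliation ``is transverse to the volume-preserving Anosov geodesic flow, which immediately witnesses tautness.'' The weak stable foliation of an Anosov flow \emph{contains} the flow direction --- it is tangent to the flow, not transverse to it --- so this step fails as written. (There is also a minor naming issue: $\ker(h^*+k^*)$ annihilates $\mathrm{span}(l,h-k)$ and is the weak stable foliation of the Anosov flow generated by $l$, which is conjugate to, but not literally, the geodesic flow generated by $h$.) The conclusion is nevertheless true and easily repaired: since $PSL(2,\mathbb{R})$ is unimodular, any left-invariant vector field transverse to $\ker(h^*+k^*)$ (for instance the one dual to $h+k$) generates a flow preserving the Haar volume form $h^*\wedge l^*\wedge k^*$, and the existence of a volume-preserving transverse flow is one of the standard characterizations of tautness; alternatively, the weak stable foliation of an Anosov flow on a closed $3$-manifold has no compact leaves and is therefore taut. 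With either correction, your argument that tautness pulls back under the finite cover (pull back a closed $2$-form positive on leaves) is fine, and the corollary follows.
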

The non-Stein fillable contact structure $\eta_{tan}$ is defined as the pullback of a tangential contact structure. This is completely analogous to the examples of Eliashberg in \cite{Eli3}, who showed that the pullbacks of the standard contact structure on $T^3 = ST^*T^2$ under suitable coverings admit weak, but not strong, symplectic fillings. This leads to the following question, a negative answer of which would provide a very large class of symplectically fillable contact structures without exact or Stein fillings.
\begin{qu}
Let $M$ be a non-trivial fibrewise cover of $ST^*\Sigma$, where $\Sigma$ is a closed, hyperbolic surface. Is the pullback of the canonical contact structure exactly or even Stein fillable?
\end{qu}
One may also formulate this question for the cotangent bundle of any hyperbolic orbifold. However, here the situation appears to be more subtle, since for example $\overline{\Sigma}(2,3,11)$ carries a unique tight contact structure, which is both Stein fillable and a fibrewise pullback of the canonical contact structure under a non-trivial covering map. A natural way of excluding this example would be to assume that the twisting number of the contact structure on the finite cover is not minimal amongst those contact structures that are isotopic to horizontal ones. This would also fit in with similar phenomena that occur in the classification of horizontal contact structures on $S^1$-bundles (cf.\ \cite{Gir2}, \cite{Ho}).

We finally remark that in the case of coverings of $ST^*T^2$ the obstruction to the existence of a strong filling can be seen as given by the Giroux torsion of the contact structure (cf.\ \cite{Gay}). Thus one might hope that there is some similar type of obstruction for covers of the unit cotangent bundle of a higher genus surface or even on more general Seifert fibred spaces.




\end{document}